\newcommand{\mc}{\mathcal}      
\newcommand{\set}[2]{\left\{#1 : #2\right\}} 
\newcommand{\powerset}[1]{{\mathrm{pow}(#1)}} 
\newcommand{\flats}[1]{L(#1)}  
\newcommand{\closure}[1]{\operatorname{cl}_{#1}}
\newcommand{\circuits}[1]{\operatorname{\mathcal{C}}(#1)}
\newcommand{\N}{\mathbb{N}}
\DeclareMathOperator{\id}{id} 
\DeclareMathOperator{\Hom}{Hom} 
\DeclareMathOperator{\colim}{colim} 
\newcommand{\MS}{\mathbf{Mat}_{\bullet}}   
\newcommand{\MSF}{\mathbf{Mat}_{\bullet}^{\textrm{fin}}}   
\newcommand{\SET}{\mathbf{Set}_{\bullet}} 
\newcommand{\fin}[1]{{#1}^{\operatorname{fin}}} 
\newcommand{\E}{\mc{E}}                 
\newcommand{\EE}{\mathfrak{E}}          
\newcommand{\MM}{\mathfrak{M}}          
\newcommand{\FF}{\mathbb{F}}            
\newcommand{\GG}{\mathbb{G}}            
\theoremstyle{definition}
\newtheorem{mydef}{Definition}[section]
\newtheorem{myeg}[mydef]{Example}
\newtheorem{rmk}[mydef]{Remark}
\newtheorem{que}[mydef]{Problem}
\theoremstyle{plain}
\newtheorem{mythm}[mydef]{Theorem}
\newtheorem{lem}[mydef]{Lemma}
\newtheorem{prop}[mydef]{Proposition}
\newtheorem{cor}[mydef]{Corollary}
\title{On Infinite Matroids with Strong Maps\\ Proto-Exactness and Finiteness Conditions}
\author{Chris Eppolito}
\address{Department of Mathematical Sciences, Binghamton University, Binghamton, NY, 13902, USA}
\email{eppolito-at-math-dot-binghamton-dot-edu}
\author{Jaiung Jun}
\address{Department of Mathematics, State University of New York at New Paltz, NY 12561, USA}
\email{junj@newpaltz.edu}
\begin{document}
\begin{abstract}
  This paper investigates infinite matroids from a categorical perspective.
  We prove that the category of infinite matroids is a proto-exact category in the sense of Dyckerhoff and Kapranov, thereby generalizing our previous result on the category of finite matroids.
  We also characterize finitary matroids as co-limits of finite matroids, and show that the finite matroids are precisely the finitely presentable objects in this category.
\end{abstract}

\maketitle

\section{Introduction}

Matroids combinatorially generalize properties of linear dependence in a finite-dimensional vector space.
Finite matroids naturally appear in various fields in mathematics.
Meanwhile, theories of infinite matroids have long sought to generalize finite matroids to infinite ground sets while retaining nice combinatorial properties as in the finite case, especially the multitude of equivalent definitions (i.e.\ cryptomorphisms) they enjoy.
Our work uses the definition of infinite matroids from \cite{bruhn2013axioms}, which successfully generalizes five of the most common axiomatizations from the finite case.

Recent years have seen growing interest in categorical aspects of matroids (e.g.\ see \cite{heunen2018category, eur2020logarithmic}).
In a previous paper, the authors (with M.~Szczesny) studied the category of finite matroids with strong maps in connection with combinatorial Hopf algebras, and initiated the study of algebraic K-theory for finite matroids.
Our main observation was that the category of finite matroids has the structure of a finitary proto-exact category in the sense of \cite{dyckerhoff2019higher}.

In this paper we define a category $\MS$ of infinite matroids which contains the category of finite matroids and strong maps as a full subcategory.
\Cref{theorem: main theorem} proves that $\MS$ has the structure of a proto-exact category, thereby generalizing \cite[Theorem 5.11]{eppolito2018proto}.
\Cref{proposition: finitary matroids} characterizes finitary matroids as the co-limits of finite matroids; along the way we obtain \Cref{cor: induce fin}, yielding a ``finitization'' functor $\MS \to \MSF$.
\Cref{proposition: finitely presentable} shows that the finitely presentable objects of $\MS$ are precisely the finite matroids.
We end with a short discussion of some open problems.

\section{Preliminaries}\label{section: prelim}

We denote the powerset of $E$ by $\powerset{E}$ and denote set of non-negative integers by $\N$.

\subsection{Infinite matroids}

First recall the definition of infinite matroid from \cite[Sections 1.3 and 1.4]{bruhn2013axioms}.
A collection $\mc{A} \subseteq \powerset{E}$ has \emph{Property $(M)$} when for all $A \in \mc{A}$ and all $S \subseteq E$ the subcollection $\mc{A}(A, S) = \set{X \in \mc{A}}{A \subseteq X \subseteq S}$ has a maximal element.

\begin{mydef}
  A function $\closure{} \colon \powerset{E} \to \powerset{E}$ is a \emph{matroid closure} on $E$ when the following hold:
  \begin{enumerate}
  \item[(CLO)] $\closure{}$ is a closure operator, i.e.,~extensive, monotone, and idempotent on $\powerset{E}$.
  \item[(CLE)] For all $Z \subseteq E$ and all $x, y \in E$, if $y \in \closure{}(Z \cup x) \setminus \closure{}(Z)$, then $x \in \closure{}(Z \cup y)$.
  \item[(CLM)] The collection $\mc{I}_{\closure{}} \coloneqq \set{ I \subseteq E }{x \notin \closure{}(I \setminus x) \text{ for all } x \in I}$ has Property $(M)$.
  \end{enumerate}
\end{mydef}

\begin{mydef}
  A collection $\mc{C} \subseteq \powerset{E}$ is the set of \emph{circuits} of a matroid on $E$ when the following hold:
  \begin{enumerate}
  \item[(C0)] $\emptyset \notin \mc{C}$.
  \item[(CI)] Elements of $\mc{C}$ are pairwise incomparable with respect to inclusion.
  \item[(CE)] For all $X \subseteq C \in \mc{C}$, all families $\set{ C_x \in \mc{C} }{ x \in X }$ satisfying $x \in C_y$ iff $x = y$, and for all $z \in C \setminus \bigcup_{x} C_x$, there is a $C' \in \mc{C}$ such that $z \in C' \subseteq \left( C \cup \bigcup_x C_x \right) \setminus X$.
  \item[(CM)] The collection $\mc{I}_{\mc{C}} \coloneqq \set{ I \subseteq E }{ C \not \subseteq I \text{ for all } C \in \mc{C}}$ has Property $(M)$.
  \end{enumerate}
\end{mydef}

These are ``cryptomorphic'' descriptions of a matroid on $E$ by \cite{bruhn2013axioms}.
These axiomatizations agree with standard definitions of finite matroids; see \cite[Appendix]{white1986theory}, noting that (CLM) and (CM) are redundant by finiteness of the ground set.

A \emph{loop} of $M$ is an element of $\closure{M}(\emptyset)$ (equivalently an element $e \in E$ with $\{e\}$ a circuit of $M$).
A \emph{pointed matroid} is a matroid with a distinguished loop $\ast$.
We often denote the ground set of a (pointed) matroid $M$ by $E(M)$, or simply $E$ when the matroid is clear from context.

\begin{myeg}
  The set $\mc{C}_r(E)$ of $r$-subsets of $E$ is the set of circuits of $U_r(E)$, the \emph{uniform matroid of rank $r$ on $E$}, for all positive integers $r$.
  Similarly, the set $\mc{C}^*_r(E)$ of subsets of $E$ with complement an $r$-subset of $E$ is the set of circuits of $U_r^*(E)$, the \emph{uniform matroid of corank $r$ on $E$}, for all positive integers $r$.
\end{myeg}

By \emph{matroid}, we mean \emph{possibly infinite pointed matroid} unless otherwise stated.
In particular, we explicitly state when ground sets are finite, and all of our matroids come with a distinguished loop.
We often suppress the word ``pointed'' in our terminology.

\begin{mydef}\label{definition: restriction and contraction}
  Let $M$ be a matroid with circuit set $\mc{C}$ and let $S \subseteq E$.
  The \emph{restriction} of $M$ to $S$ is the matroid $M | S$ on ground set $S$ with circuit set $\mc{C} | S = \set{C \in \mc{C}}{C \subseteq S}$.
  The \emph{contraction} of $M$ by $S$ is the matroid $M / S$ on ground set $E \setminus S$ with circuits the nonempty, inclusion-wise minimal elements of $\mc{C} / S = \set{C \setminus S}{C \in \mc{C}}$.
\end{mydef}

When working with pointed matroids, we consider only pointed restrictions and pointed contractions, i.e.\ restrictions and contractions which preserve the distinguished loop.
Effectively, this means pointed restrictions have the form $M | (S \cup \ast)$ and pointed contractions have the form $M / (S \setminus \ast)$.

\subsection{Proto-exact categories}

\begin{mydef}\label{proto_exact}
  A {\em proto-exact} category is a pointed category $\mc{C}$ equipped with two distinguished classes of morphisms, \emph{admissible monomorphisms} $\MM$ and \emph{admissible epimorphisms} $\EE$, which satisfy the following conditions:
  \begin{enumerate}
  \item Every morphism $0 \rightarrow M$ is in $\MM$, and every morphism $M \rightarrow 0$ is in $\EE$.
  \item The classes $\MM$ and $\EE$ are closed under composition and contain all isomorphisms.
  \item A commutative square of the following form in $\E$ with $i,i' \in \MM$ and $j, j' \in \EE$
    \begin{equation}\label{eq: biCartesian}
      \begin{tikzcd}
        M\ar[r,"i"]\ar[d,"j",swap]& N
        \ar[d,"j'"]\\
        M'\ar[r,"i'"]& N'
      \end{tikzcd}
    \end{equation}
    is Cartesian if and only if it is co-Cartesian.
  \item Every diagram \(
    \begin{tikzcd}
      M'\ar["i'",r]&N'&N\ar["j'",l,swap]
    \end{tikzcd}
    \) with $i' \in \MM$ and $j' \in \EE$ can be completed to a bi-Cartesian square \eqref{eq: biCartesian} with $i \in \MM$ and $j \in \EE$.
  \item Every diagram \(
    \begin{tikzcd}
      M'&M\ar["j",l,swap]\ar["i",r]&N
    \end{tikzcd}
    \) with $i \in \MM$ and $j \in \EE$ can be completed to a bi-Cartesian square \eqref{eq: biCartesian} with $i' \in \MM$ and $j' \in \EE$.
  \end{enumerate}
\end{mydef}

See \cite{eppolito2018proto} for examples and motivation regarding proto-exact categories.
For proto-exact categories in connection with algebraic geometry, see \cite{eberhardt2020algebraic, jun2020toric}.

\section{Proto Exactness of $\MS$ and $\MSF$}\label{section: main theorem}

In this section we prove that the category of pointed infinite matroids is proto-exact (\Cref{theorem: main theorem}).

Let $E$ be a set.
For $X \subseteq E$ and $e \in E$, let $X \cup e$ denote the union $X \cup \{e\}$.
The proof of the following lemma is straightforward, mimicking the proof of the finite case.

\begin{lem}\label{closure by circuits}
  Let $M$ be a matroid on $E$.  Then
  \[
    \closure{M}(X)
    =
    X \cup \set{e \in E}{\text{there is a circuit satisfying }e \in C \subseteq X \cup e}.
  \]
\end{lem}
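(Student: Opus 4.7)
The plan is to prove both inclusions, in each case invoking the cryptomorphism $\mc{I}_{\closure{}} = \mc{I}_{\mc{C}}$ from \cite{bruhn2013axioms} to pass between the closure and circuit axiomatizations of $M$.

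For $(\supseteq)$, extensivity immediately yields $X \subseteq \closure{M}(X)$, so I focus on $e \notin X$ lying in some circuit $C$ with $e \in C \subseteq X \cup e$. It suffices to show $e \in \closure{M}(C \setminus e)$, after which monotonicity delivers $e \in \closure{M}(X)$. Since circuits are minimal dependent sets, $C \setminus e$ is independent while $C$ itself is dependent; dependence of $C$ then produces some $y \in C$ with $y \in \closure{M}(C \setminus y)$. The case $y = e$ is immediate. Otherwise, setting $Z = C \setminus \{y, e\}$, independence of $C \setminus e$ rules out $y \in \closure{M}(Z)$, while $y \in \closure{M}(Z \cup e)$ by choice of $y$; applying (CLE) with roles $x = e$ and $y = y$ yields $e \in \closure{M}(Z \cup y) = \closure{M}(C \setminus e)$.

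For $(\subseteq)$, take $e \in \closure{M}(X) \setminus X$. Applying (CLM) to $\mc{I}_{\closure{}}(\emptyset, X)$ produces a maximal independent subset $I \subseteq X$, and a short (CLE) argument (either $x$ itself or some $z \in I$ witnesses dependence of $I \cup x$) shows that every $x \in X \setminus I$ lies in $\closure{M}(I)$. Hence $X \subseteq \closure{M}(I)$, so idempotency gives $e \in \closure{M}(I)$; thus $I \cup e \notin \mc{I}_{\closure{}}$ and, by the cryptomorphism, $I \cup e$ contains a circuit $C$. Since $I$ is independent it contains no circuit, forcing $e \in C \subseteq I \cup e \subseteq X \cup e$.

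The main obstacle is the bookkeeping around the cryptomorphism $\mc{I}_{\closure{}} = \mc{I}_{\mc{C}}$, which underlies each translation between ``$C$ is a circuit'' and closure-theoretic dependence statements; once these identifications are made, both directions reduce to short applications of (CLE), with Property (M) supplying the only genuinely infinitary ingredient in the argument.
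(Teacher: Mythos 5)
Your proof is correct and is precisely the argument the paper has in mind: the paper omits the proof entirely, remarking only that it is ``straightforward, mimicking the proof of the finite case,'' and your two inclusions carry out that finite-case argument, with (CLM) correctly supplying the maximal independent subset of $X$ that finiteness would otherwise provide. Both applications of (CLE) and the passage between $\mc{I}_{\closure{}}$ and $\mc{I}_{\mc{C}}$ check out, so there is nothing to fix.
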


We say $X \subseteq E$ is a \emph{flat} of $M$ when $\closure{M}(X) = X$.

\begin{lem}\label{lattice of flats}\label{join and meet}
  The set $\flats{M}$ of flats of a matroid $M$ under inclusion is a complete, atomic lattice.
  Meet and join are given by $X \wedge Y = X \cap Y$ and $X \vee Y = \closure{M}(X \cup Y)$ respectively.
\end{lem}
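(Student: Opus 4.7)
The plan is to separate the four assertions of the lemma---that meets are intersections, that joins are closures of unions, that the lattice is complete, and that it is atomic---and verify each directly from the closure axioms (CLO) and the exchange property (CLE). The bulk of the work is generic closure-operator lattice theory; (CLE) only enters essentially for atomicity.

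First I would show that $\flats{M}$ is closed under arbitrary intersection. Given a family $\{F_i\}_{i \in I}$ of flats, monotonicity of $\closure{M}$ yields $\closure{M}\bigl(\bigcap_i F_i\bigr) \subseteq \closure{M}(F_j) = F_j$ for every $j$, so $\closure{M}\bigl(\bigcap_i F_i\bigr) \subseteq \bigcap_i F_i$; combined with extensivity this gives equality. In particular $X \wedge Y = X \cap Y$ lies in $\flats{M}$ whenever $X,Y$ do. Since $E$ itself is trivially a flat, $\flats{M}$ has a top element, and together with arbitrary meets this proves completeness; the bottom is $\closure{M}(\emptyset)$. For joins, I would observe that $\closure{M}(X \cup Y)$ is a flat by idempotence, contains both $X$ and $Y$ by extensivity and monotonicity, and is contained in every flat containing $X \cup Y$ by monotonicity. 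Hence it is the least upper bound, giving $X \vee Y = \closure{M}(X \cup Y)$.

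The atomicity claim is the one step requiring (CLE). Given any flat $F \supsetneq \closure{M}(\emptyset)$, I would pick $e \in F \setminus \closure{M}(\emptyset)$ and argue that the flat $\closure{M}(\{e\}) \subseteq F$ is an atom. Suppose for contradiction that some flat $G$ satisfies $\closure{M}(\emptyset) \subsetneq G \subsetneq \closure{M}(\{e\})$, and pick $f \in G \setminus \closure{M}(\emptyset)$. Then $f \in \closure{M}(\emptyset \cup e) \setminus \closure{M}(\emptyset)$, so (CLE) with $Z = \emptyset$, $x = e$, $y = f$ gives $e \in \closure{M}(\emptyset \cup f) \subseteq G$, forcing $\closure{M}(\{e\}) \subseteq G$ and contradicting the strict inclusion. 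Thus every flat strictly above $\closure{M}(\emptyset)$ contains an atom, so $\flats{M}$ is atomic. The main subtlety is noticing that a strict chain $\closure{M}(\emptyset) \subsetneq G \subsetneq \closure{M}(\{e\})$ is exactly the configuration ruled out by (CLE); the rest of the argument is formal.
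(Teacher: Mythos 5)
Your proposal is correct, and it diverges from the paper's own argument in two respects, both to your advantage. For the meet, the paper does not use the generic fact that the closed sets of a closure operator are stable under intersection; instead it shows $\closure{M}(X \cap Y) = X \cap Y$ by invoking the circuit description of closure (\Cref{closure by circuits}): any $e \in \closure{M}(X\cap Y)\setminus(X\cap Y)$ lies on a circuit $C$ with $e \in C \subseteq (X\cap Y)\cup e$, hence lies in $\closure{M}(X)\cap\closure{M}(Y) = X\cap Y$, a contradiction. Your route through monotonicity alone is more elementary, needs no circuits, and yields closure under \emph{arbitrary} intersections in one stroke, which is exactly what you then reuse (together with the top element $E$) for completeness, where the paper simply cites the general theory of closure operators. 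The more significant difference concerns atomicity: the printed proof establishes the meet, the join, and completeness but never addresses the atomicity claim, whereas your (CLE) argument that $\closure{M}(\{e\})$ is an atom for every $e \notin \closure{M}(\emptyset)$ supplies that missing piece; the instantiation $Z = \emptyset$, $x = e$, $y = f$ is set up correctly, and you are right that this is the one point where a matroid-specific axiom, rather than pure closure-operator formalism, is genuinely needed. In short, your proof is sound and covers strictly more of the stated lemma than the paper's.
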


\begin{proof}
  Let $X, Y \in \flats{M}$.
  Notice $X, Y \subseteq \closure{M}(X \cup Y)$; if $X, Y \leq Z$, then $X \cup Y \subseteq Z$ yields $\closure{M}(X \cup Y) \leq \closure{M}(Z) = Z$.
  Hence $X \vee Y = \closure{M}(X \cup Y)$ and $\flats{M}$ is a join semilattice.
  If $Z \leq X, Y$, then $Z \subseteq X \cap Y \subseteq \closure{M}(X \cap Y)$; thus $X \wedge Y \subseteq \closure{M}(X \cap Y)$.
  Assuming to the contrary that $e \in \closure{M}(X \cap Y) \setminus (X \cap Y)$, there is a circuit satisfying $e \in C \subseteq X \cap Y \cup e$.
  Thus $e \in \closure{M}(X) \cap \closure{M}(Y) = X \cap Y$, contradicting our assumption.
  Completeness follows from the fact that $\flats{M}$ is the collection of closed sets of a closure operator.
\end{proof}

\begin{lem}\label{flats and operations}
  Let $M$ be a matroid.
  For all $S \subseteq E(M)$ we have both
  \begin{align*}
    \flats{M / S}
    & = \set{F \setminus S}{S \subseteq F \in \flats{M}}
    &
    &\text{and}
    &
      \flats{M | S}
    & = \set{F \cap S}{F \in \flats{M}}
      .
  \end{align*}
  The corresponding matroid closure operators are given by
  \begin{align*}
    \closure{M / S}(T)
    & = \closure{M}(T \cup S)\setminus S
    &
    &\text{and}
    &
      \closure{M | S}(T)
    & = \closure{M}(T) \cap S
      .
  \end{align*}
\end{lem}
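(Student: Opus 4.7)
The plan is to establish the two closure formulas first, using \Cref{closure by circuits} together with the circuit-level descriptions of $M|S$ and $M/S$ in \Cref{definition: restriction and contraction}; the flat characterizations then follow by direct algebraic manipulation.

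For the restriction, the circuits of $M|S$ are exactly the circuits of $M$ contained in $S$, and for $T \subseteq S$ and $e \in S$, any circuit $C$ satisfying $C \subseteq T \cup e$ automatically lies in $S$. Thus the circuit-based computation of $\closure{M|S}(T)$ coincides with the circuit-based computation of $\closure{M}(T) \cap S$, giving the restriction closure formula directly. For the contraction, one inclusion is immediate: a circuit $C' = D \setminus S$ of $M/S$ with $e \in C' \subseteq T \cup e$ lifts to $D \in \mc{C}(M)$ witnessing $e \in \closure{M}(T \cup S) \setminus S$. For the reverse inclusion, given $D \in \mc{C}(M)$ with $e \in D \subseteq T \cup S \cup e$, we must produce a circuit $C' \in \mc{C}(M/S)$ containing $e$ and contained in $T \cup e$. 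If $D \setminus S$ is already inclusion-minimal in $\set{D' \setminus S}{D' \in \mc{C}(M)}$, then $C' = D \setminus S$ works. Otherwise there is a $D' \in \mc{C}(M)$ with $\emptyset \neq D' \setminus S \subsetneq D \setminus S$; if $e \in D'$, iterate with $D'$ in place of $D$, and if $e \notin D'$, apply the strong circuit elimination axiom (CE) to the main circuit $D$, elimination set $\{x\}$ for any $x \in D' \setminus S$, companion circuit $D'$, and saved element $e$, yielding $D'' \in \mc{C}(M)$ with $e \in D''$ and $D'' \setminus S \subsetneq D \setminus S$. A transfinite descent (or Zorn-type argument on the collection of such circuits ordered by the inclusion of their $S$-traces) produces the desired minimal witness $C'$.

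The flat characterizations follow from the closure formulas. For restriction, $F \subseteq S$ is a flat of $M|S$ iff $F = \closure{M}(F) \cap S$; setting $F' = \closure{M}(F) \in \flats{M}$ displays $F = F' \cap S$, and conversely, for any $F' \in \flats{M}$, monotonicity gives $\closure{M|S}(F' \cap S) = \closure{M}(F' \cap S) \cap S \subseteq \closure{M}(F') \cap S = F' \cap S$, with the reverse containment from extensivity. For contraction, $F \subseteq E \setminus S$ is a flat of $M/S$ iff $F = \closure{M}(F \cup S) \setminus S$, equivalently iff $F \cup S$ is $\closure{M}$-closed and contains $S$; this identifies the flats of $M/S$ as exactly the sets $F' \setminus S$ with $S \subseteq F' \in \flats{M}$. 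The principal technical obstacle is the descent step in the contraction closure formula: trivial in the finite case, it requires genuine use of the strong circuit elimination axiom (CE) in our setting, which is the key feature distinguishing infinite matroids from their finite counterparts.
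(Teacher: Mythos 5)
Your overall strategy---compute both closure operators from the circuit description of \Cref{closure by circuits} and then read off the flats---is exactly the paper's, and your treatment of the restriction formula and of both flat characterizations is correct and matches the paper's computation. The one place where you go beyond the paper is also the one place where your argument breaks down: the reverse inclusion $\closure{M}(T\cup S)\setminus S \subseteq \closure{M/S}(T)$, i.e.\ producing a genuine circuit of $M/S$ (a \emph{minimal} nonempty trace) through $e$ inside $T\cup e$, starting from an arbitrary circuit $D$ of $M$ with $e\in D\setminus S\subseteq T\cup e$. Your single elimination step is fine: (CE) applied with $C=D$, $X=\{x\}$ for $x\in D'\setminus S\subseteq D$, $C_x=D'$, $z=e$ does yield $D''$ with $e\in D''\setminus S\subsetneq D\setminus S$. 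But the ``transfinite descent / Zorn-type argument'' you invoke to finish does not go through: a strictly descending chain of infinite traces need not terminate, and Zorn's lemma for minimal elements requires every descending chain in your family to have a lower bound \emph{in the family}, whereas the intersection of a chain of circuit-traces containing $e$ need not itself be the trace of a circuit. There is also a second issue: even a minimal member of your restricted family (traces containing $e$ and contained in $T\cup e$) need not be minimal in the \emph{full} trace family, hence need not be a circuit of $M/S$ at all.

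The missing ingredient is not (CE) but the maximality axiom (CM)/(M): in an infinite matroid, the existence of minimal dependent sets---equivalently, that every dependent set contains a circuit and that fundamental circuits exist---is precisely what that axiom provides, and it is this, together with the standard identification of the circuits of $M/S$ as the minimal nonempty traces from \cite{bruhn2013axioms}, that closes the gap. The paper itself sidesteps the point, presenting the equality between ``there is a minimal nonempty trace $C$ with $e\in C\subseteq T\cup e$'' and ``there is a circuit $D$ of $M$ with $e\in D\setminus S\subseteq T\cup e$'' as one line of a routine computation and implicitly leaning on the standard theory of minors. So: right road map, correct identification of the crux, but the descent argument you offer for that crux is not a proof; replace it with an appeal to (CM) (or to the minor facts in \cite{bruhn2013axioms}).
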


\begin{proof}
  The full claim follows from our description of the closure operator.
  We denote the collection of minimal nonempty sets in a family $\mc{F}$ by $\min(\mc{F})$.

  We compute the closure on the contraction for all $T \subseteq E(M) \setminus S$ via
  \begin{align*}
    \closure{M / S}(T)
    & = T \cup \set{e \in E(M) \setminus S}{\exists C \in \circuits{M / S} \text{ with } e \in C \subseteq T \cup e}
    \\
    & = T \cup \set{e \in E(M) \setminus S}{\exists C \in \min(D \setminus S : D \in \circuits{M}) \text{ with } e \in C \subseteq T \cup e}
    \\
    & = T \cup \set{e \in E(M) \setminus S}{\exists C \in \circuits{M} \text{ with } e \in (C \setminus S) \subseteq T \cup e}
    \\
    & = T \cup \set{e \in E(M) \setminus S}{\exists C \in \circuits{M} \text{ with } e \in C \subseteq T \cup S \cup e}
    \\
    & = T \cup (\closure{M}(T \cup S) \setminus S)
    \\
    & = \closure{M}(T \cup S) \setminus S
      .
  \end{align*}

  We compute the closure on the restriction for all $T \subseteq S$ via
  \begin{align*}
    \closure{M | S}(T)
    & = T \cup \set{e \in S}{\exists C \in \circuits{M | S} \text{ with } e \in C \subseteq T \cup e}
    \\
    & = T \cup \set{e \in S}{\exists C \in \circuits{M} \text{ with } C \subseteq S \text{ and } e \in C \subseteq T \cup e}
    \\
    &  = T \cup \set{e \in S}{\exists C \in \circuits{M} \text{ with } e \in C \subseteq T \cup e}
    \\
    & = T \cup (\closure{M}(T) \cap S)
    \\
    & = \closure{M}(T) \cap S
      .
  \end{align*}
  The claimed formulas for $\flats{M | S}$ and $\flats{M / S}$ now follow trivially.
\end{proof}

We thus obtain the usual result that the deletion and contraction operations commute.
We state this result below in terms of restriction and contraction (the proof is routine).

\begin{cor}\label{restriction-contraction}
  If $M$ is a matroid with disjoint $S, T \subseteq E$, then $(M | (S \cup T)) / T = (M / T) | S$.
\end{cor}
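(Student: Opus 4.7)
The plan is to show both matroids are defined on the same ground set $S$ and then verify they have identical closure operators, whence they coincide by the cryptomorphism cited after \Cref{closure by circuits}. Concretely, the heavy lifting has already been done in \Cref{flats and operations}, which translates both restriction and contraction into operations on the ambient closure operator, so the argument reduces to a short set-theoretic computation exploiting $S \cap T = \emptyset$.

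First I would verify ground sets: on the left, $M | (S \cup T)$ lives on $S \cup T$, and contracting $T$ leaves $(S \cup T) \setminus T = S$ since $S$ and $T$ are disjoint; on the right, $M / T$ lives on $E \setminus T \supseteq S$, and restricting to $S$ gives $S$. Next, for any $X \subseteq S$, I would compute both closures by iterated application of \Cref{flats and operations}. For the right-hand side,
\[
  \closure{(M / T) | S}(X)
  = \closure{M / T}(X) \cap S
  = \bigl( \closure{M}(X \cup T) \setminus T \bigr) \cap S
  = \closure{M}(X \cup T) \cap S,
\]
where the last step uses $S \cap T = \emptyset$. For the left-hand side,
\[
  \closure{(M | (S \cup T)) / T}(X)
  = \closure{M | (S \cup T)}(X \cup T) \setminus T
  = \bigl( \closure{M}(X \cup T) \cap (S \cup T) \bigr) \setminus T,
\]
and since $T \subseteq \closure{M}(X \cup T)$ by extensivity and $T \subseteq S \cup T$, the final expression simplifies to $\closure{M}(X \cup T) \cap S$ as well.

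Comparing the two results, the closure operators on both sides agree on every $X \subseteq S$, hence the two matroids on $S$ coincide. I do not anticipate a real obstacle here: the only subtlety is the bookkeeping around disjointness of $S$ and $T$ (in particular ensuring $T$ can be stripped cleanly from $\closure{M}(X \cup T) \cap (S \cup T)$), together with a brief check that the pointedness convention — taking $\ast \in S$ for the relevant restrictions and keeping $\ast \notin T$ for the relevant contractions — is consistent on both sides. Once those set-theoretic details are in place the claim is immediate.
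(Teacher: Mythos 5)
Your proposal is correct and is exactly the argument the paper has in mind: the paper declares the proof ``routine'' and derives it from \Cref{flats and operations} by comparing the closure operators of both sides, which is precisely your computation. (Your appeal to extensivity in the last simplification is not even needed, since $(A \cap (S\cup T))\setminus T = A\cap S$ already follows from $S\cap T=\emptyset$.)
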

Next we discuss the maps of our category.

\begin{prop}\label{strong map conditions}
  Let $M$ and $N$ be matroids and $f \colon E(M) \to E(N)$ be a map of ground sets.  The following are equivalent.
  \begin{enumerate}
  \item \label{closure containment} %
    For all $A \subseteq E(M)$ one has $f (\closure{M}(A)) \subseteq \closure{N}(f( A))$.
  \item \label{flat preimages} %
    The preimage of every flat of $N$ is a flat of $M$.
  \item \label{lattice morphism} %
    The map $f^{\#} \colon \flats{M} \to \flats{N}$ induced by $f$ is a morphism of complete lattices which restricts to a map $A_M \to A_N \cup \{0\}$ of atoms.
  \end{enumerate}
\end{prop}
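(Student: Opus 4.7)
The plan is to prove the cycle $(1) \Rightarrow (2) \Rightarrow (1) \Rightarrow (3) \Rightarrow (1)$. I interpret the induced map in (3) as $f^\#(F) \coloneqq \closure{N}(f(F))$ (which always lies in $\flats{N}$), and I read ``morphism of complete lattices'' as preservation of arbitrary joins together with the bottom element, since meets of flats are not preserved by strong maps in general (e.g., a strong map collapsing two non-parallel elements onto the same non-loop point sends the meet of two atoms---namely the bottom---to the bottom, while the meet of their images is an atom).

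For $(1) \Rightarrow (2)$, apply the hypothesis to $A = f^{-1}(F)$ for $F \in \flats{N}$: since $f(A) \subseteq F$, we get $f(\closure{M}(A)) \subseteq \closure{N}(F) = F$, hence $\closure{M}(A) \subseteq A$, with the reverse inclusion from extensivity. For $(2) \Rightarrow (1)$, the flat $F \coloneqq \closure{N}(f(A))$ satisfies $A \subseteq f^{-1}(F)$, and by hypothesis $f^{-1}(F)$ is a flat containing $\closure{M}(A)$; so $f(\closure{M}(A)) \subseteq F$.

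For $(1) \Rightarrow (3)$, using the description of joins from \Cref{lattice of flats} and idempotence of $\closure{N}$, condition (1) yields
\[
  f^\#\!\left(\bigvee_i F_i\right) = \closure{N}\!\left(f\!\left(\closure{M}\!\left(\bigcup_i F_i\right)\right)\right) = \closure{N}\!\left(\bigcup_i f(F_i)\right) = \bigvee_i f^\#(F_i),
\]
so $f^\#$ preserves arbitrary joins. The atoms of $\flats{M}$ are precisely the flats $\closure{M}(\{e\})$ for $e$ a non-loop of $M$ (by \Cref{lattice of flats}, with axiom (CLE) ruling out any flat strictly between $\closure{M}(\emptyset)$ and $\closure{M}(\{e\})$); applying (1) to $\{e\}$ gives $f^\#(\closure{M}(\{e\})) = \closure{N}(\{f(e)\})$, which is an atom of $\flats{N}$ if $f(e)$ is a non-loop and equals $\closure{N}(\emptyset)$ otherwise.

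For $(3) \Rightarrow (1)$, I would write $\closure{M}(A) = \bigvee_{a \in A} \closure{M}(\{a\})$ (a consequence of \Cref{lattice of flats} and idempotence) and combine join-preservation with the atom condition:
\[
  \closure{N}(f(\closure{M}(A))) = f^\#(\closure{M}(A)) = \bigvee_{a \in A} \closure{N}(\{f(a)\}) = \closure{N}(f(A)),
\]
so that $f(\closure{M}(A)) \subseteq \closure{N}(f(A))$ by extensivity. The main hurdle is settling on the correct interpretation of (3); once $f^\#$ and ``lattice morphism'' are fixed, each implication reduces to a short closure calculation resting on \Cref{closure by circuits} and \Cref{lattice of flats}.
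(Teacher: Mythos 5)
Your equivalences $(1) \Leftrightarrow (2)$ and the implication $(1) \Rightarrow (3)$ are correct, and your identification of the atoms of $\flats{M}$ as the flats $\closure{M}(\{e\})$ for $e$ a non-loop, via (CLE), is sound. (The paper itself only cites \cite[Proof of Proposition 8.1.3]{white1986theory}, so a self-contained argument is worthwhile.) The gap is in $(3) \Rightarrow (1)$: the middle equality of your display silently uses $f^{\#}(\closure{M}(\{a\})) = \closure{N}(\{f(a)\})$, which does not follow from condition (3) under your own reading of it. From $f^{\#}(F) = \closure{N}(f(F))$ you only get $\closure{N}(\{f(a)\}) \subseteq f^{\#}(\closure{M}(\{a\}))$. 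The atom condition upgrades this to equality when $f(a)$ is a non-loop of $N$ (an atom contained in an atom-or-bottom must equal it), but when $f(a)$ is a loop the left side is the bottom of $\flats{N}$ while the right side may still be a genuine atom, because some element of $M$ parallel to $a$ may map to a non-loop.

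Indeed, under your interpretation the implication is false. Let $M$ have ground set $\{\ast, a, b\}$ with circuits $\{\ast\}$ and $\{a,b\}$ (so $a$ and $b$ are parallel non-loops), let $N$ have ground set $\{\ast, a', b'\}$ with circuits $\{\ast\}$ and $\{a'\}$ (so $a'$ is a loop and $b'$ is not), and let $f$ fix $\ast$ and send $a \mapsto a'$, $b \mapsto b'$. Both lattices of flats are two-element chains and $f^{\#}$ is the unique isomorphism between them, hence a complete lattice morphism carrying the unique atom to the unique atom; yet $f(\closure{M}(\{a\})) = \{\ast, a', b'\} \not\subseteq \{\ast, a'\} = \closure{N}(f(\{a\}))$, so (1) fails (as does (2): $f^{-1}(\{\ast,a'\}) = \{\ast,a\}$ is not a flat of $M$). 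The proposition only holds if ``restricts to a map $A_M \to A_N \cup \{0\}$ of atoms'' is read as the stronger compatibility condition that the restriction of $f^{\#}$ to atoms is the map induced by $f$ on points, i.e.\ $f^{\#}(\closure{M}(\{e\})) = \closure{N}(\{f(e)\})$ for every non-loop $e$ of $M$; this is what the classical lattice-theoretic formulation of strong maps requires. Your $(1) \Rightarrow (3)$ step already establishes exactly this stronger identity, and with that reading of (3) your computation for $(3) \Rightarrow (1)$ goes through verbatim; you should therefore state explicitly that the weaker ``atoms to atoms or bottom'' reading is insufficient and adopt the stronger one.
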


\begin{proof}
  This follows from \cite[Proof of Proposition 8.1.3]{white1986theory}, noting completeness of the corresponding lattices for $\eqref{lattice morphism} \implies \eqref{closure containment}$.
\end{proof}

\begin{mydef}
  Let $M$ and $N$ be matroids.
  A \emph{strong map} of (pointed) matroids is a map $f \colon E(M) \to E(N)$ of (pointed) sets satisfying any (hence all) of the conditions given in \Cref{strong map conditions}.
\end{mydef}

Let $\MS$ be the category of pointed matroids with pointed strong maps.
\Cref{flats and operations}, \Cref{restriction-contraction}, and \Cref{canonical maps} all hold for pointed restrictions and pointed contractions.
The details from the finite case in \cite[Section 2]{eppolito2018proto} carry over to the infinite case verbatim.

We obtain the following from \Cref{strong map conditions} and \Cref{flats and operations}.

\begin{cor}\label{canonical maps}
  Restriction $M | S \xrightarrow{i_S} M$ and contraction $M \xrightarrow{c_S} M / S$ are strong maps.
\end{cor}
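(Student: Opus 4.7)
The plan is to verify the flat-preimage characterization of strong maps from \Cref{strong map conditions}(\ref{flat preimages}) for each of $i_S$ and $c_S$, using the explicit description of the lattices of flats supplied by \Cref{flats and operations}.

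For the restriction $i_S \colon M | S \to M$, the underlying map of pointed sets is just the inclusion $S \hookrightarrow E(M)$, and so the preimage of a flat $F \in \flats{M}$ is $F \cap S$. But \Cref{flats and operations} already describes $\flats{M|S}$ as exactly the collection of sets of the form $F \cap S$ with $F \in \flats{M}$, so this direction is immediate.

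For the contraction $c_S \colon M \to M/S$, I would take the underlying pointed map $E(M) \to (E(M) \setminus S) \cup \{\ast\}$ to be the identity on $E(M) \setminus S$ and to collapse every element of $S$ to the basepoint $\ast$. Given a flat $F' \in \flats{M/S}$, \Cref{flats and operations} expresses it as $F' = F \setminus S$ for some $F \in \flats{M}$ with $S \subseteq F$. The one step deserving attention is noting that $\ast$ remains a loop of $M/S$---the circuit $\{\ast\}$ survives because $\ast \notin S$---so $\ast$ lies in every flat of $M/S$; in particular $\ast \in F'$. This forces $c_S^{-1}(F') = F' \cup S = (F \setminus S) \cup S = F$, which is a flat of $M$ as required.

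The main obstacle is essentially nonexistent---the whole argument is routine bookkeeping built on \Cref{flats and operations}---but the one subtlety worth flagging is this basepoint observation in the contraction case, which is what guarantees $S$ itself lies in the preimage of every flat and hence that the preimage agrees with the original flat $F$ upstairs in $M$.
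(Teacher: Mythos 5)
Your proposal is correct and follows essentially the same route as the paper, which simply cites \Cref{strong map conditions} together with \Cref{flats and operations}; you have merely filled in the routine details (including the correct observation that $\ast$ is a loop of $M/S$, hence lies in every flat, which is exactly why the pointed convention makes $c_S$ work).
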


The map $c_S$ from \Cref{canonical maps} is well defined as a function because we consider \emph{pointed} matroids.
Indeed, this is our main motivation for using this convention.

The lemma below follows from \cite[Corollary 3.8]{heunen2018category}.

\begin{lem}\label{monic and epic}
  Let $\FF \colon \MS \to \SET$ be the forgetful functor sending a pointed matroid $(M,*_M)$ to its pointed ground set $(E(M),*_M)$.
  Let $M \xrightarrow{f} N$ be a pointed strong map.
  \begin{enumerate}
  \item $f$ is monic in $\MS$ if and only if $\FF(f)$ is monic in $\SET$.
  \item $f$ is epic in $\MS$ if and only if $\FF(f)$ is an epic in $\SET$.
  \end{enumerate}
\end{lem}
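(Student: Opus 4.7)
The plan is to exploit the faithfulness of $\FF$ for the easy implications and to construct small ``test matroids'' for the converses, mirroring the argument of \cite[Corollary 3.8]{heunen2018category} adapted to our pointed infinite setting. The functor $\FF$ is faithful: two pointed strong maps $M \to N$ coincide in $\MS$ precisely when they agree as pointed functions of sets. Since any faithful functor reflects both monics and epics, the ``if'' directions of both parts follow immediately. The content is thus the two converse implications.

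For part (1), suppose $f$ is monic in $\MS$ and $x, y \in E(M)$ satisfy $f(x) = f(y)$. I would take $P$ to be the pointed matroid on $\{\ast, p\}$ with circuit set $\{\{\ast\}\}$, so that $\ast$ is the distinguished loop and $p$ is a coloop; its flats are $\{\ast\}$ and $\{\ast, p\}$. Any pointed function $g \colon P \to M$ is then automatically a strong map, since for each flat $F$ of $M$ the preimage $g^{-1}(F)$ is either $\{\ast\}$ (when $g(p) \notin F$) or all of $P$ (when $g(p) \in F$), both of which are flats of $P$. The pointed strong maps $g, h \colon P \to M$ defined by $g(p) = x$ and $h(p) = y$ satisfy $fg = fh$, so $g = h$ by the monic hypothesis and hence $x = y$.

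For part (2), suppose $f$ is epic in $\MS$ and, for contradiction, that some $n \in E(N)$ lies outside the image of $f$; note $\ast_N = f(\ast_M)$ always lies in the image, so $n \ne \ast_N$. I would take $Q$ to be the pointed matroid on $\{\ast, q_1, q_2\}$ with all three elements loops, so that its only flat is the entire ground set and every pointed function into $Q$ is vacuously a strong map. Define $g, h \colon N \to Q$ by $g(n) = q_1$, $h(n) = q_2$, and $g(z) = h(z) = \ast$ for all other $z$; then $gf = hf$ (both are the constant map at $\ast$) but $g \ne h$, contradicting the epic hypothesis.

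The only thing requiring verification is that the test matroids $P$ and $Q$ satisfy the infinite matroid axioms of \cite{bruhn2013axioms}, but since both have finite ground sets Property $(M)$ is automatic and the remaining circuit axioms are immediate. I expect no real obstacle here; the essential content of the argument is simply the observation that enough pointed strong maps into and out of arbitrary matroids exist to detect failures of injectivity and surjectivity, and that the distinguished loop gives canonical pointings for the test matroids.
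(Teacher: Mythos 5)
Your proof is correct. The paper itself gives no argument here: it simply cites \cite[Corollary 3.8]{heunen2018category} and asserts that the result carries over to pointed infinite matroids. Your proposal instead supplies a self-contained verification, and it holds up: faithfulness of $\FF$ reflects monos and epis (giving both ``if'' directions), the two-element pointed matroid $P$ with circuit set $\{\{\ast\}\}$ receives every pointed set map as a strong map (every flat of $M$ contains $\ast_M$, so preimages are $\{\ast\}$ or all of $P$), and the all-loops matroid $Q$ has the whole ground set as its unique flat, so every pointed map into it is strong. The case $x = \ast_M$ or $y = \ast_M$ in part (1) causes no trouble since only the basepoint of the \emph{source} is constrained, and in part (2) the missed element $n$ is automatically distinct from $\ast_N$, so $g$ and $h$ are genuinely pointed. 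What your approach buys is a proof that visibly works in the pointed infinite setting without having to check that the cited finite/unpointed statement transfers; what the paper's citation buys is brevity. Either way the substance is the same observation you close with: there are enough strong maps into and out of an arbitrary matroid to detect injectivity and surjectivity on ground sets.
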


Let $\MM$ be the class of pointed strong maps which factor as $N \xrightarrow{~} M|S \xrightarrow{i_S} M$, and let $\EE$ consist of all pointed strong maps which factor as $M \xrightarrow{c_S} M/S \xrightarrow{~} N$.

\begin{mythm}\label{theorem: main theorem}
  The triple $(\MS, \MM, \EE)$ is a proto-exact category.
\end{mythm}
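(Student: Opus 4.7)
The plan is to transfer the argument of \cite[Theorem~5.11]{eppolito2018proto} from the finite to the infinite setting, observing that each step uses only the tools established above (in particular \Cref{restriction-contraction}, \Cref{flats and operations}, and \Cref{strong map conditions}) together with the fact, proved in \cite{bruhn2013axioms}, that Property~$(M)$ is preserved under restriction and contraction. The zero object is the one-element pointed matroid $0 = (\{\ast\}, \{\{\ast\}\})$. The unique map $0 \to M$ factors as $0 \xrightarrow{\sim} M|\{\ast_M\} \hookrightarrow M$, and $M \to 0$ factors as $M \twoheadrightarrow M/(E(M) \setminus \ast_M) \xrightarrow{\sim} 0$; this gives axiom~(1). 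Closure under composition in axiom~(2) follows from the identities $(M|S)|T = M|T$ for $\ast \in T \subseteq S$ and $(M/S)/T = M/(S \cup T)$ for disjoint $S,T \subseteq E(M) \setminus \ast$, while every isomorphism lies in $\MM \cap \EE$ via $M = M|E(M) = M/\emptyset$.

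For axioms~(4) and~(5), I would construct the bi-Cartesian completion explicitly. Given a cospan $M' \xrightarrow{i'} N' \xleftarrow{j'} N$ with $i' \in \MM$ and $j' \in \EE$, write $M' \cong N'|S$ and $N' \cong N/T$, so $\ast \in S \subseteq E(N) \setminus T$ and $\ast \notin T \subseteq E(N) \setminus \ast$. Setting $M \coloneqq N|(S \cup T)$, \Cref{restriction-contraction} gives $M/T = (N/T)|S = N'|S = M'$, so the natural square commutes. The pullback and pushout universal properties can be tested against an arbitrary competitor by reducing to the underlying pointed sets via \Cref{monic and epic}, and then using \Cref{strong map conditions} to guarantee that the induced set map is strong (because flats pull back to flats). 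The dual construction supplies axiom~(5): given $M' \xleftarrow{j} M \xrightarrow{i} N$, write $M \cong N|S$ and $M' \cong M/T$ with $T \subseteq S \setminus \ast$, and set $N' \coloneqq N/T$.

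For axiom~(3), I would show that any commutative square as in \eqref{eq: biCartesian} with $i, i' \in \MM$ and $j, j' \in \EE$ is determined up to isomorphism by a triple $(N, S, T)$ with $\ast \in S \subseteq E(N)$ and $\ast \notin T \subseteq E(N) \setminus \ast$; the four vertices are $M = N|S$, $N' = N/T$, and by \Cref{restriction-contraction}, $M' = (N|S)/(S \cap T) = (N/T)|(S \setminus T)$. The rigidity point is that since $i' \in \MM$ is injective off $\ast$, commutativity of the square forces the subset contracted by $j$ to equal $S \cap T$ exactly, so every such square is isomorphic to the canonical one. It then suffices to check that this canonical square is simultaneously Cartesian and co-Cartesian, which follows from the same universal-property analysis as for axioms~(4) and~(5).

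The main obstacle will be axiom~(3): confirming that the common vertex $M = N|S$ genuinely carries both universal properties requires the compatibility of closures on restrictions, contractions, and the ambient matroid that is encoded by \Cref{flats and operations}. Once this is in hand, the argument runs identically to the finite case because no step in the verification invokes finiteness of $E$; Property~$(M)$ enters only implicitly, to guarantee that the intermediate objects $N|(S\cup T)$, $N/T$, and $(N|S)/(S \cap T)$ are genuine matroids, which is provided by \cite{bruhn2013axioms}.
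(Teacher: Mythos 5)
Your overall strategy---extend the key lemmas (\Cref{flats and operations}, \Cref{restriction-contraction}, \Cref{canonical maps}, \Cref{monic and epic}) to the infinite setting and then run the finite-case argument of \cite[Theorem 5.11]{eppolito2018proto}---is exactly what the paper does, and your treatment of axioms (1), (2), (4), and (5) is sound: the completion $M \coloneqq N|(S \cup T)$ for axiom (4) and $N' \coloneqq N/T$ for axiom (5) are the right constructions, and \Cref{restriction-contraction} makes the squares commute.

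However, your proposed proof of axiom (3) contains a genuine error. Axiom (3) asserts that a commutative square with $i, i' \in \MM$ and $j, j' \in \EE$ is Cartesian \emph{if and only if} it is co-Cartesian; it does not assert that every such square is bi-Cartesian, and your argument (``every such square is isomorphic to the canonical one'' plus ``the canonical square is bi-Cartesian'') would prove that strictly stronger, and false, statement. Concretely, take any matroid $N$ with a non-loop and form the square $0 \to N$, $0 \to 0$, $0 \to 0$, $N \to 0$: all four maps are admissible of the required types and the square commutes, yet it is neither Cartesian (the pullback of $0 \to 0 \leftarrow N$ is $N$) nor co-Cartesian (the pushout of $0 \leftarrow 0 \to N$ is $N$). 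Relatedly, your claimed identity $(N|S)/(S \cap T) = (N/T)|(S \setminus T)$ does not follow from \Cref{restriction-contraction} (which only yields $(N|S)/(S \cap T) = (N/(S \cap T))|(S \setminus T)$) and fails in general: with $\{a,b\}$ a circuit of $N$, $S = \{\ast, a\}$, $T = \{b\}$, the left side has $a$ a non-loop while the right side has $a$ a loop. The correct argument must characterize, for an arbitrary commutative square of the given shape, when it is Cartesian and when it is co-Cartesian as two separate conditions on the data $(N, S, T)$ (essentially, that $T \subseteq S$ together with a compatibility of closures supplied by \Cref{flats and operations}), and then observe that the two conditions coincide; squares that are neither Cartesian nor co-Cartesian must be allowed to exist.
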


\begin{proof}
  we have proved the following extensions of our results in \cite[Section 5]{eppolito2018proto}:
  \begin{enumerate}
  \item \Cref{flats and operations}; For $T \subseteq S \subseteq E$, one has
    \[
      \flats{(M | S) / T} = \set{(F \cap S) \setminus T}{T \subseteq F \in \flats{M}}.
    \]
  \item \Cref{restriction-contraction}; For disjoint $S, T \subseteq E$, one has
    \[
      (M | (S \cup T)) / T = (M / T) | S.
    \]
  \item \Cref{canonical maps}; The restriction and contraction maps are strong maps.
  \item \Cref{monic and epic}; A strong map is monic (resp.~epic) in $\MS$ if and only if it is injective (resp.~surjective) on ground sets.
  \end{enumerate}
  The proof from \cite[Section 5]{eppolito2018proto} now shows that $\MS$ is a proto-exact category.
\end{proof}

\section{Finiteness Conditions}

The propositions in this section illustrate the utility of applying categorical language to study infinite matroids.
We focus primarily on two finiteness conditions for matroids in this analysis.
The first condition (i.e.\ finitarity) is a well-studied condition in the literature on infinite matroids---indeed, this was one of the earliest definitions for infinite matroids.
The second condition (i.e.\ finite presentability) is a well-studied categorical notion, which has not been considered in the context of infinite matroids to the best of our knowledge.

\subsection{Finitary Matroids}

A matroid is \emph{finitary} when all of its circuits are finite.
Since restrictions and contractions of a finitary matroid are again finitary, the following is clear from \Cref{theorem: main theorem}.

\begin{cor}\label{finitary proto-exact}
  The full subcategory $\MSF$ of $\MS$ with objects the finitary matroids is proto-exact with the induced proto-exact structure.
\end{cor}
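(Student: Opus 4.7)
The plan is to show that the proto-exact structure of $\MS$ restricts cleanly along the inclusion $\MSF \hookrightarrow \MS$, so essentially nothing new needs to be checked beyond one preservation property. The central observation I would establish first is that finitarity is preserved under both restriction and contraction: by \Cref{definition: restriction and contraction}, $\circuits{M | S}$ is a sub-collection of $\circuits{M}$, while every circuit of $M / S$ is a subset of some $C \in \circuits{M}$; in both cases finite circuits remain finite. Consequently, if a morphism in $\MM$ or $\EE$ has finitary source (symmetrically, finitary target), then every object appearing in its canonical factorization through a restriction or contraction is again finitary.

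I would then set $\MM_{\mathrm{fin}}$ and $\EE_{\mathrm{fin}}$ to be the morphisms of $\MM$ and $\EE$ whose source and target both lie in $\MSF$, and check the five axioms of \Cref{proto_exact} in turn. Axiom~(1) holds because the zero object $\{\ast\}$ of $\MS$ is finitary. Axiom~(2) is inherited directly from $\MS$, since composing admissible morphisms with finitary endpoints again has finitary endpoints. Axiom~(3) is a biconditional on squares whose vertices already lie in $\MSF$, and so transfers from $\MS$ to the full subcategory $\MSF$.

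The content lies in axioms~(4) and~(5). For each I would invoke the fact---visible from the proof of \Cref{theorem: main theorem} as inherited from \cite[Section 5]{eppolito2018proto}---that the fourth vertex of the bi-Cartesian completion is constructed as a restriction or contraction of one of the given matroids. Since the input matroids are finitary by hypothesis, the output is finitary by the opening observation, and the newly produced admissible monic and epic thus lie in $\MM_{\mathrm{fin}}$ and $\EE_{\mathrm{fin}}$.

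The only obstacle worth flagging is a bookkeeping check: one must inspect the proof of \Cref{theorem: main theorem} (and the finite case in \cite[Section 5]{eppolito2018proto}) to confirm that the completions produced really do arise from restrictions and contractions of the given data, rather than from some ambient construction that could leave $\MSF$. Once that is confirmed, the stability of finitarity under restriction and contraction yields the corollary.
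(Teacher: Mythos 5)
Your proposal is correct and rests on exactly the same observation the paper uses: restrictions and contractions of a finitary matroid are again finitary, so the bi-Cartesian completions from the proof of \Cref{theorem: main theorem} never leave $\MSF$. The paper simply states this in one sentence and declares the corollary clear, whereas you spell out the axiom-by-axiom check; the substance is identical.
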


The following is the core result of this section, and drives the remainder of our results.

\begin{prop}\label{finitize matroid}
  The collection $\fin{\mc{C}}$ of finite circuits of matroid $M$ is also a set of circuits of a matroid $\fin{M}$.
  Moreover, the identity map on $E(M)$ is a strong map $\fin{M} \to M$.
\end{prop}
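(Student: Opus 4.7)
The plan is to verify the four circuit axioms (C0), (CI), (CE), (CM) for $\fin{\mc{C}}$ directly, then deduce the strong map condition using \Cref{closure by circuits}.

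Axioms (C0) and (CI) are immediate since $\fin{\mc{C}} \subseteq \mc{C}$. For (CE), I would take data $X \subseteq C \in \fin{\mc{C}}$, a family $\{C_x \in \fin{\mc{C}} : x \in X\}$ with $x \in C_y$ iff $x = y$, and $z \in C \setminus \bigcup_x C_x$. Applying (CE) for $M$ yields some $C' \in \mc{C}$ with $z \in C' \subseteq (C \cup \bigcup_x C_x) \setminus X$. The key observation is that $X$ is finite (being a subset of the finite set $C$) and each $C_x$ is finite, so the bounding set $C \cup \bigcup_{x \in X} C_x$ is a finite union of finite sets, whence $C'$ is finite and lies in $\fin{\mc{C}}$.

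The main obstacle is (CM), which is the infinite-matroid-specific axiom, but the finitary hypothesis makes it quite clean via Zorn's lemma. Given $A \in \mc{I}_{\fin{\mc{C}}}$ and $S \subseteq E$, I would take a chain $\{I_\alpha\}$ in $\mc{I}_{\fin{\mc{C}}}(A, S)$ and set $I = \bigcup_\alpha I_\alpha$. If some $C \in \fin{\mc{C}}$ satisfied $C \subseteq I$, then finiteness of $C$ together with the chain property would give $C \subseteq I_\alpha$ for some $\alpha$, contradicting $I_\alpha \in \mc{I}_{\fin{\mc{C}}}$. Hence $I$ is an upper bound in $\mc{I}_{\fin{\mc{C}}}(A, S)$, and Zorn's lemma supplies a maximal element. (Note this sidesteps the full strength of (M) for $M$ itself; one only needs the finitary case.)

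For the strong map statement, I would invoke condition \eqref{closure containment} of \Cref{strong map conditions}, which requires $\closure{\fin{M}}(A) \subseteq \closure{M}(A)$ for all $A \subseteq E(M)$. By \Cref{closure by circuits},
\[
  \closure{\fin{M}}(A) = A \cup \set{e \in E}{\exists C \in \fin{\mc{C}} \text{ with } e \in C \subseteq A \cup e},
\]
and the analogous formula holds for $\closure{M}$ with $\mc{C}$ in place of $\fin{\mc{C}}$. Since $\fin{\mc{C}} \subseteq \mc{C}$, the inclusion is immediate.
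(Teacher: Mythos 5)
Your proposal is correct and follows essentially the same route as the paper's proof: the paper likewise observes that circuit elimination among finite circuits stays finite, that unions of chains of $\fin{\mc{C}}$-independent sets remain independent (giving (CM) without invoking (M) for $M$ itself), and verifies the strong map condition via \Cref{closure by circuits} and the inclusion $\fin{\mc{C}} \subseteq \mc{C}$. Your write-up simply spells out the details more explicitly.
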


\begin{proof}
  The circuit elimination axiom yields that eliminations between finite circuits are again finite, and the directed union of every chain of $\fin{\mc{C}}$-independent sets is trivially $\fin{\mc{C}}$-independent; thus $\fin{\mc{C}}$ is a set of matroid circuits.
  We compute as follows to verify condition \ref{closure containment} of \Cref{strong map conditions}.
  \begin{align*}
    \closure{fin}(S)
    & =
      S \cup \set{e \in E(M)}{\exists C \in \circuits{M} \text{ finite} \text{ with } e \in C \subseteq S \cup e}
    \\
    & \subseteq
      S \cup \set{e \in E(M)}{\exists C \in \circuits{M} \text{ with } e \in C \subseteq S \cup e}
    \\
    & =
      \closure{M}(S)
      .
    & \qedhere
  \end{align*}
\end{proof}

\begin{prop}\label{subset system}
  The matroid $\fin{M}$ is the co-limit of the diagram $\set{M | S \hookrightarrow M | T}{ S \subseteq T \text{ finite}}$ in $\MS$.
\end{prop}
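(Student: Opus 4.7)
The plan is to verify directly that $\fin{M}$, equipped with the canonical inclusions, satisfies the universal property of the colimit.

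First I set up the cocone. For each finite pointed subset $S \subseteq E(M)$, the circuits of $M | S$ are exactly $\set{C \in \circuits{M}}{C \subseteq S}$; since $S$ is finite, each such $C$ is automatically finite, whence $M | S = \fin{M} | S$ as matroids. Thus the canonical restriction map $\iota_S \colon M | S \to \fin{M}$ of \Cref{canonical maps} (applied to $\fin{M}$) is a well-defined strong map, and compatibility $\iota_T \circ (M | S \hookrightarrow M | T) = \iota_S$ for $S \subseteq T$ is immediate on ground sets.

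For the universal property, let $(f_S \colon M | S \to N)_S$ be any cocone over the diagram. Since $E(M)$ is the directed union of its pointed finite subsets and therefore their colimit in $\SET$, the ground-set maps $\FF(f_S)$ glue to a unique pointed map $f \colon E(\fin{M}) \to E(N)$ with $f|_S = \FF(f_S)$ for every $S$. To show $f$ is a strong map I verify condition \eqref{closure containment} of \Cref{strong map conditions}. Fix $A \subseteq E(\fin{M})$ and $e \in \closure{\fin{M}}(A)$; we may assume $e \notin A$. By \Cref{closure by circuits} applied to $\fin{M}$, there is a finite circuit $C$ of $M$ with $e \in C \subseteq A \cup e$. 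Pick a finite $A_0 \subseteq A$ with $C \subseteq A_0 \cup e$ and set $S \coloneqq A_0 \cup \{e, *\}$. Then $C$ is a circuit of $M | S$, so $e \in \closure{M | S}(A_0)$, and since $f_S$ is a strong map,
\[
  f(e) \;=\; f_S(e) \;\in\; \closure{N}(f_S(A_0)) \;=\; \closure{N}(f(A_0)) \;\subseteq\; \closure{N}(f(A)).
\]
Uniqueness of $f$ is automatic from uniqueness at the level of pointed sets.

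There is no serious obstacle here; the one delicate point worth flagging is the identification $M | S = \fin{M} | S$ for finite $S$, which is precisely what makes $\iota_S$ a strong map into $\fin{M}$ rather than merely into $M$. The rest amounts to the familiar slogan that closure in a finitary matroid is controlled by its finite circuits, each of which is visible in some finite pointed restriction appearing in the diagram.
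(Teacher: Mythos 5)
Your proposal is correct and follows essentially the same route as the paper: verify the universal property by gluing the ground-set maps via the forgetful functor to $\SET$ and then check condition \eqref{closure containment} of \Cref{strong map conditions} using a finite circuit supplied by \Cref{closure by circuits}. Your version is in fact slightly more careful than the paper's at two points---you explicitly check that the cocone maps $M|S \to \fin{M}$ are strong (via $M|S = \fin{M}|S$ for finite $S$), and your closure chain through $\closure{M|S}(A_0)$ lands cleanly in $\closure{N}(f(A))$ where the paper's chain stops at $\closure{N}(\alpha(C))$---but the underlying argument is identical.
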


\begin{proof}
  We prove that $\fin{M}$ satisfies the universal property of co-limits.
  Let $N$ be a matroid and $M | S \xrightarrow{\alpha_S} N$ be a strong map for each $S \subseteq E(M)$ such that $\alpha_T|_S = \alpha_S$ for all pairs $S \subseteq T$ of finite sets.
  Applying the forgetful functor $\FF \colon \MS \to \SET$, we obtain a unique co-limit map $E(M) \xrightarrow{\alpha} E(N)$ of pointed sets.
  We claim that $\alpha$ is a strong map $\fin{M} \xrightarrow{\alpha} N$.
  Let $A \subseteq E(M)$ and $e \in \fin{\closure{M}}(A) \setminus A$ be arbitrary.
  There is a finite circuit $C$ of $M$ with $e \in C \subseteq A \cup e$ by \Cref{closure by circuits}.
  Hence
  \[ %
    \alpha (e) %
    \in \alpha (C) %
    \subseteq \alpha (\closure{M | C}(C) )%
    \subseteq \closure{N}(\alpha (C)) %
  \] %
  yields that $\alpha$ is a strong map by \Cref{strong map conditions}.
\end{proof}

\begin{cor}\label{proposition: finitary matroids}
  A matroid is finitary if and only if it is a co-limit of finite matroids.
\end{cor}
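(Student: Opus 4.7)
The plan is to prove both implications by systematically leveraging \Cref{finitize matroid} and \Cref{subset system}. A useful preliminary observation is that a matroid $M$ is finitary exactly when $\fin{\mc{C}} = \mc{C}$, i.e., when $\fin{M} = M$. For the forward direction, if $M$ is finitary, then $M = \fin{M}$, and \Cref{subset system} immediately presents $M$ as the co-limit of the diagram $\{M|S \hookrightarrow M|T : S \subseteq T \text{ finite}\}$ in $\MS$, whose vertices $M|S$ are matroids on finite ground sets, hence finite.

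For the converse, suppose $M = \colim_{i \in I} M_i$ in $\MS$ with co-cone $\{f_i : M_i \to M\}$ and each $M_i$ finite. My strategy is to show $M = \fin{M}$ via a universal property argument. The key step is to lift each $f_i$ to a strong map $M_i \to \fin{M}$ with the same underlying set function: for $x \in \closure{M_i}(A) \setminus A$, \Cref{closure by circuits} provides a circuit $C$ of $M_i$ with $x \in C \subseteq A \cup x$; since $E(M_i)$ is finite, $f_i(A) \cup f_i(x)$ is finite; and, using strongness of $f_i : M_i \to M$ to conclude $f_i(x) \in \closure{M}(f_i(A))$, a second application of \Cref{closure by circuits} in $M$ produces a circuit $C'$ of $M$ with $f_i(x) \in C' \subseteq f_i(A) \cup f_i(x)$. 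Then $C'$ is finite because it sits inside a finite set, so $f_i(x) \in \closure{\fin{M}}(f_i(A))$, verifying condition \eqref{closure containment} of \Cref{strong map conditions}. The lifted co-cone $\{f_i : M_i \to \fin{M}\}$ is compatible with the diagram morphisms (the underlying set maps are unchanged, and strong maps are determined by their underlying set maps), so the universal property of the co-limit produces a unique strong map $g : M \to \fin{M}$; applying the forgetful functor $\FF$ forces $g$ to be the identity on ground sets. Composing with the identity strong map $\fin{M} \to M$ from \Cref{finitize matroid} then yields $\closure{M}(X) \subseteq \closure{\fin{M}}(X) \subseteq \closure{M}(X)$ for all $X$, so $M = \fin{M}$ is finitary.

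The main obstacle is the lifting step in the converse: a strong map $M_i \to M$ need not be a strong map $M_i \to \fin{M}$ in general, because $\fin{M}$ has a potentially smaller closure operator, and the strong map condition moves in the \emph{wrong} direction for a cheap deduction from \Cref{finitize matroid} alone. What saves the argument is the interplay of finiteness of $E(M_i)$ with \Cref{closure by circuits}: finiteness of the source guarantees that every ``closure certificate'' from $M_i$ lands in a finite subset of $E(M)$, and \Cref{closure by circuits} then extracts the finite circuit of $M$ that $\fin{M}$ requires to witness closure.
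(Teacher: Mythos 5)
Your proof is correct and follows essentially the same route as the paper: forward direction via \Cref{subset system} applied to $M=\fin{M}$, and converse by lifting the co-cone to $\fin{M}$ and using the universal property together with \Cref{finitize matroid}. The paper phrases the lifting step as the observation that $\closure{M}$ and $\closure{\fin{M}}$ agree on finite subsets of $E(M)$, which is exactly the circuit argument you spell out (a circuit contained in a finite set is finite).
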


\begin{proof}
  If $M$ is finitary, then $M = \fin{M}$ is a co-limit of finite matroids by \Cref{subset system}.
  Conversely, suppose that
  \[
    M = \colim \set{M_i \xrightarrow{f_{i, j}} M_j}{i, j \in I}
  \]
  is a co-limit of finite matroids with strong maps $M_i \xrightarrow{g_i} M$.
  Note that $\closure{}(A) = \closure{fin}(A)$ for all finite $A \subseteq E(M)$ by \Cref{closure by circuits}.
  Thus the set maps $g_i$ are also strong maps for matroids $M_i \to \fin{M}$.
  This induces a strong map $M \to \fin{M}$, which is necessarily the identity as a set map.
  Hence $M = \fin{M}$ by \Cref{finitize matroid}.
\end{proof}

We obtain the following as a special case of \Cref{proposition: finitary matroids}.

\begin{cor}
  A matroid on a countable set is finitary if and only if it is a direct limit of finite matroids.
\end{cor}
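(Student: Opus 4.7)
The plan is to invoke \Cref{proposition: finitary matroids} and use countability to refine the co-limit witness to a sequential one.

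For the ``if'' direction, I would observe that every direct limit is in particular a co-limit, so \Cref{proposition: finitary matroids} immediately yields finitarity.

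For the ``only if'' direction, let $M$ be a finitary matroid on a countable ground set $E$, and enumerate $E = \{\ast, e_1, e_2, \ldots\}$ including the basepoint. Setting $S_n = \{\ast, e_1, \ldots, e_n\}$ for each $n \in \N$, the successive pointed restrictions (strong maps by \Cref{canonical maps}) assemble into a sequential direct system
\[
  M | S_1 \hookrightarrow M | S_2 \hookrightarrow \cdots
\]
of finite matroids, and I claim its co-limit in $\MS$ is $M$. Every finite subset of $E$ lies in some $S_n$, so the chain $(S_n)_{n \in \N}$ is cofinal in the directed poset of finite subsets of $E$ under inclusion; hence a standard cofinality argument identifies the sequential co-limit with the co-limit over all finite subsets, which by \Cref{subset system} is $\fin{M}$, and $\fin{M} = M$ by finitarity.

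The only substantive step is the cofinality identification of these two co-limits. Since existence of both is guaranteed by \Cref{subset system}, this reduces to a standard general-categorical fact, so I do not anticipate further obstacles.
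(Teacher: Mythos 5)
Your proof is correct and takes essentially the same approach as the paper: both directions reduce to \Cref{proposition: finitary matroids}, and the forward direction realizes $M$ as the sequential co-limit of its restrictions to an exhausting chain of finite subsets containing the basepoint. The paper leaves the identification of that sequential co-limit with $M$ as ``easy to show''; your cofinality argument (the chain is final in the directed poset of finite subsets, whose co-limit is $\fin{M} = M$ by \Cref{subset system}) is a valid way to supply that detail.
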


\begin{proof}
  If $M$ is finitary on a countable ground set, then identifying the elements of $M$ with $\N$ it is easy to show that $M$ is the co-limit of $M|\{0, 1, ..., n\} \hookrightarrow M|\{0, 1, ..., n+1\}$ where $0$ is represents the distinguished loop.
  The reverse implication follows trivially from \Cref{proposition: finitary matroids} and the fact that a direct limit of finite sets is countable.
\end{proof}

\begin{cor}\label{cor: induce fin}
  Every strong map $M \xrightarrow{f} N$ induces a strong map $\fin{M} \xrightarrow{f^{fin}} \fin{N}$.
  In particular, $\fin{\bullet} \colon \MS \to \MSF$ constitutes a full functor.
\end{cor}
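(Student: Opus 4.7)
The plan is to verify the closure-containment condition of \Cref{strong map conditions} for the identity-on-ground-sets map induced by $f$, and then to obtain the functorial claim formally because $\fin{\bullet}$ acts as the identity on both ground sets and underlying set maps of morphisms.

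Given a strong map $f \colon M \to N$, I would take $f^{\operatorname{fin}}$ to be the same underlying set map $E(M) \to E(N)$, which is permissible since finitization preserves ground sets by \Cref{finitize matroid}. To verify strength, I would fix $A \subseteq E(M)$ and $e \in \closure{\fin{M}}(A) \setminus A$. By \Cref{closure by circuits} applied in $\fin{M}$ (whose circuits are exactly the finite circuits of $M$), there exists a finite circuit $C$ of $M$ with $e \in C \subseteq A \cup e$. Setting $A' \coloneqq C \setminus e \subseteq A$, this finite set satisfies $e \in \closure{M}(A')$ via the same witnessing circuit $C$; strength of $f \colon M \to N$ then yields $f(e) \in \closure{N}(f(A'))$. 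Applying \Cref{closure by circuits} to $N$, either $f(e) \in f(A') \subseteq f(A)$, or there is a circuit $D$ of $N$ with $f(e) \in D \subseteq f(A') \cup f(e)$. Since $f(A')$ is finite, $D$ must be finite, hence a circuit of $\fin{N}$; in either case $f(e) \in \closure{\fin{N}}(f(A))$, as required.

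The functoriality claim then follows at once: preservation of identities and composition by $\fin{\bullet}$ is automatic because the assignment acts as the identity on underlying pointed set maps of morphisms, and every object is sent into $\MSF$ by construction of the finitization. The main subtlety of the argument is the circuit-extraction step above, where the dependency $f(e) \in \closure{N}(f(A'))$ is a priori witnessed by some circuit of the full matroid $N$ which could conceivably be infinite; the crucial saving observation is that $f(A')$ is itself finite, which forces any witnessing circuit $D \subseteq f(A') \cup f(e)$ to be finite as well and hence already a circuit of $\fin{N}$.
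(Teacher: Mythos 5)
Your verification of the first claim is correct, but it takes a genuinely different route from the paper. The paper does not check the closure condition by hand: it observes that $f$ restricts to strong maps $M|S \to N|f(S) \to \fin{N}$ for each finite $S$ and then invokes the universal property of the co-limit presentation of $\fin{M}$ from \Cref{subset system} to produce $\fin{M} \to \fin{N}$. You instead verify condition (1) of \Cref{strong map conditions} directly, and your pivot --- replacing $A$ by the finite set $A' = C \setminus e$ so that any circuit of $N$ witnessing $f(e) \in \closure{N}(f(A'))$ is contained in the finite set $f(A') \cup f(e)$, hence is itself finite and therefore a circuit of $\fin{N}$ --- is exactly the right observation and is sound (together with monotonicity of $\closure{\fin{N}}$ and the trivial case $e \in A$). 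Your route is more elementary and self-contained, needing only \Cref{closure by circuits}; the paper's route is shorter given the machinery already in place and makes explicit the compatibility of $f^{\operatorname{fin}}$ with the co-limit description of $\fin{M}$, which is what gets reused in the proof of \Cref{proposition: finitely presentable}.

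One omission: the statement also asserts that $\fin{\bullet}$ is a \emph{full} functor, and your final paragraph only establishes functoriality (preservation of identities and composition, landing in $\MSF$), not fullness. The paper's justification is that, by \Cref{finitize matroid}, $\fin{\bullet}$ restricts to the identity on $\MSF$ --- it is a left inverse of the inclusion $\MSF \hookrightarrow \MS$ --- so every morphism of $\MSF$ arises as its own image under $\fin{\bullet}$. You should add a sentence to this effect to cover the second assertion.
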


\begin{proof}
  The map $f$ yields strong maps $M | S \to N | f S \to \fin{N}$ for all $S \subseteq E(M)$.
  Hence we obtain a strong map $\fin{M} \to \fin{N}$ by \Cref{subset system}.
  The second statement is clear from \Cref{finitize matroid}; indeed $\fin{\bullet}$ is a left inverse of the inclusion functor $\MSF \hookrightarrow \MS$.
\end{proof}

\subsection{Finite Presentability}

An object $X$ in a category $\mc{C}$ is \emph{finitely presented} when the functor $\Hom_{\mc{C}}(X, -)$ preserves filtered co-limits.
We now investigate the finitely presented matroids.

The functor $\GG \colon \SET \to \MS$ sending a pointed set $E$ to the free (pointed) matroid on $E$ has a left adjoint, namely the forgetful functor $\FF \colon \MS \to \SET$.
In particular, there is a natural bijection between the sets $\Hom_{\MS}(\GG E, N)$ and $\Hom_{\SET}(E, \FF N)$; indeed, the natural isomorphism is the identity map.
On the other hand, a set $X$ is finite if and only if $\Hom(X, -)$ preserves filtered co-limits, so $\Hom(\GG E, -)$ preserves filtered co-limits if and only if $E$ is finite.
Hence the free matroid on $E$ is finitely presented if and only if $E$ is finite.
In fact, this argument generalizes as follows.

\begin{prop}\label{proposition: finitely presentable}
  A matroid is finitely presented if and only if it is finite.
\end{prop}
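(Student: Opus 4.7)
The plan is to introduce an additional right adjoint to $\FF$ and then apply finite presentability to a specific filtered colimit of ``all-loop'' matroids. Define $\mathbf{L} \colon \SET \to \MS$ to be the functor sending a pointed set $E$ to the matroid on $E$ whose circuits are precisely $\{\{e\} : e \in E\}$. Since $\mathbf{L}(E)$ has $E$ as its only flat, any pointed set map $\FF M \to E$ is automatically a strong map $M \to \mathbf{L}(E)$, yielding the natural bijection $\Hom_{\MS}(M, \mathbf{L}(E)) = \Hom_{\SET}(\FF M, E)$; hence $\FF \dashv \mathbf{L}$. Moreover, $\mathbf{L}$ has its own right adjoint sending a matroid to its set of loops, so both $\FF$ and $\mathbf{L}$ preserve filtered colimits.

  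For the $(\Rightarrow)$ direction, I would write $\FF M$ as the filtered colimit in $\SET$ of its finite pointed subsets $\{S\}$, apply the colimit-preserving $\mathbf{L}$ to get $\mathbf{L}(\FF M) = \colim_S \mathbf{L}(S)$ in $\MS$, and invoke finite presentability of $M$ together with the adjunction to conclude
  \[
    \Hom_{\SET}(\FF M, \FF M) = \Hom_{\MS}(M, \mathbf{L}(\FF M)) = \colim_S \Hom_{\MS}(M, \mathbf{L}(S)) = \colim_S \Hom_{\SET}(\FF M, S).
  \]
  The identity $\id_{\FF M}$ on the left must then factor through $\Hom_{\SET}(\FF M, S)$ for some finite $S$, forcing $\FF M \subseteq S$ and hence $\FF M$ to be finite.

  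For the $(\Leftarrow)$ direction, given $\FF M$ finite and a filtered diagram $\{N_i\}$ with colimit $N$, I would use that $\FF$ preserves the colimit to factor the set-level map $\FF f \colon \FF M \to \FF N = \colim \FF N_i$ of a given strong map $f$ through some $\FF N_j$ as a pointed set map $\tilde f$, and then promote $\tilde f$ to a strong map $M \to N_k$ for a suitable $k \geq j$. Since $M$ has only finitely many circuits, there are only finitely many strongness conditions of the form $\tilde f(c) \in \closure{N_k}(\tilde f(C \setminus c))$ to verify; each such condition lifts from $N$ to some $N_{k(C,c)}$ using how $\in$-relations in closures behave in a filtered colimit, and filteredness then produces a single $k$ dominating the finitely many indices $k(C, c)$. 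Injectivity of $\colim \Hom_{\MS}(M, N_i) \to \Hom_{\MS}(M, N)$ is immediate from the faithfulness of $\FF$ and the analogous fact for the filtered colimit of $\Hom$-sets in $\SET$.

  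The main obstacle lies in the lifting step for $(\Leftarrow)$: one must know that circuits, and hence closures, in a filtered colimit $\colim N_i$ in $\MS$ arise as images of circuits from some $N_\ell$. I expect this to follow from an explicit construction of filtered colimits in $\MS$ paralleling the proof of \Cref{subset system}, but a careful general treatment is needed.
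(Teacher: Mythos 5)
Your forward direction is correct and takes a genuinely different route from the paper. The paper tests finite presentability against the filtered system $\set{M|S}{S \text{ finite}}$ with colimit $\fin{M}$ (\Cref{subset system}), whereas you test it against the system $\{\mathbf{L}(S)\}$ of all-loop matroids with colimit $\mathbf{L}(\FF M)$ and use the adjunction $\FF \dashv \mathbf{L}$ to translate everything into $\SET$. Both of your adjunction claims check out: a set map into an all-loop matroid is automatically strong (its only flat is the whole ground set), and a strong map out of an all-loop matroid must land in the loops of the target, so $\mathbf{L}$ is itself a left adjoint and hence preserves colimits. Your version buys something concrete: the element you feed into the bijection $\Hom_{\MS}(M,\colim_S \mathbf{L}(S)) \cong \colim_S\Hom_{\MS}(M,\mathbf{L}(S))$ is $\id_{\FF M}$, which is visibly a strong map $M \to \mathbf{L}(\FF M)$, whereas the paper needs an element of $\Hom(M,\fin{M})$ whose underlying function is the identity---and that is not automatic, since \Cref{finitize matroid} only gives the identity as a strong map in the direction $\fin{M}\to M$.

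For the converse your outline coincides with the paper's (factor the underlying set map of $f \colon M \to \colim_i N_i$ through some $\FF N_j$ using finiteness of $E(M)$ and the fact that $\FF$ preserves filtered colimits), and the obstacle you flag is genuine: the paper simply asserts ``thus $f \in \Hom(M,N_j)$,'' but the factored set map need not be strong into $N_j$ itself, only (at best) into some later $N_k$. Your reduction to the finitely many conditions $\tilde f(c) \in \closure{N_k}(\tilde f(C\setminus c))$, ranging over circuits $C$ of $M$ and $c \in C$, is the right move; by \Cref{closure by circuits} this family of conditions is indeed equivalent to strongness. What remains open, as you say yourself, is the lifting step: one must show that a membership $y \in \closure{N}(X)$ with $X$ finite is witnessed at some finite stage of the diagram, and this requires an explicit description of filtered colimits in $\MS$ (in particular, that circuits of $\colim_i N_i$ contained in a finite set are eventually represented) which neither you nor the paper supplies. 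So this half of your argument is incomplete as written, though it is more careful than the paper's own treatment of the same step; closing it would require constructing, or at least characterizing, filtered colimits in $\MS$.
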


\begin{proof}
  Let $M$ be a matroid.
  Assuming $M$ is finitely presented, we note by \Cref{finitize matroid} that morphisms $M \to N$ induce $\fin{M} \to \fin{N}$.
  As $\fin{M}$ is a co-limit of its finite restrictions by \Cref{subset system}, we have $\colim_S \Hom(M, M | S) \cong \Hom(M, \colim_S M | S)$.
  Thus there is an element of the co-limit $f = \colim_S (f_S \colon M \to M_S)$ and $i_S \colon M | S \to M$ satisfying $i_S \circ f = \id_{\fin{M}}$.
  As a set map, this is the identity; thus $i_S$ is surjective, yielding $M$ is finite.

  Conversely, assuming $M$ is finite and $\colim_i N_i$ is filtered, then note $f \colon M \to \colim_i N_i$ has image contained in the ground set of $N_j$ for some $j \in I$; thus $f \in \Hom(M, N_j)$.
  As $\colim \Hom(M, N_i) = \bigcup_i \Hom(M, N_i)$ we have that the identity function is the desired bijection $\colim_i \Hom(M, N_i) \cong \Hom(M, \colim N_i)$.
\end{proof}

In particular, not all finitary matroids are finitely presented in $\MS$ (or even in $\MSF$).

\begin{rmk}
  We initially hoped that the category of finitary matroids might be locally finitely presentable, but this fails spectacularly.
  The full subcategory of finite matroids is neither complete nor co-complete as it has neither products nor co-equalizers by \cite[Propositions 3.5 and 3.7]{heunen2018category}.
\end{rmk}

\section{Future Directions}

Our initial motivation for this project was to employ categorical language to study infinite matroids, generalizing theorems from finite matroid theory when possible.
We now discuss some future directions.

\subsection{Tutte-Grothendieck Invariants}

Our initial main goal was to use categorical language to introduce a Tutte-Grothendieck invariant for infinite matroids.
We discuss here an impossibility result we obtained on Tutte-Grothendieck invariants for infinite matroids.\footnote{The example described below was also discovered independently by R.\ Pendavingh (private communication).}

Ideally such an invariant should be determined by homomorphisms from some ring, as in the finite case with the general framework of \cite{brylawski1972tgring}.
It should satisfy (at least) the following conditions to account for the matroid structure:
\begin{enumerate}
\item \label{TG generators} %
  There is a generator for each matroid isomorphism class.
\item \label{TG relations} %
  For all matroids $M$ and all elements $e$ of $M$ which is neither a loop nor a coloop we have $[M] = [M \setminus e] + [M / e]$.
\end{enumerate}
Now letting $E$ be any infinite set, let $U_r(E)$ denote the finitary uniform matroid of rank $r \geq 1$ on $E$ (i.e.\ the circuits of $U_r(E)$ are the subsets of $E$ of size $r+1$), and let $e \in E$.
Now we compute
\[
  [U_{r}(E)] %
  = [U_{r}(E) \setminus e] + [U_{r}(E) / e] %
  = [U_{r}(E \setminus e)] + [U_{r-1}(E \setminus e)] %
  = [U_{r}(E)] + [U_{r-1}(E)] %
  .
\]
Hence $U_{r}(E) = 0$ for all $r \geq 0$ and all infinite sets $E$.
Hence \emph{every ring} satisfying both (\ref{TG generators}) and (\ref{TG relations}) above must identify the finitary uniform matroids on infinite sets with $0$.
In particular, no \emph{ring} can be a universal isomorphism invariant of infinite matroids which respects single-element deletion-contraction relations and evaluates nontrivially on all matroids.

One way to circumvent this issue is to give up additive inverses, instead seeking a universal object satisfying (\ref{TG generators}) and (\ref{TG relations}) in the spirit of \emph{semirings}.

\begin{que}
  Develop a meaningful generalization of the Tutte-Grothendieck theory of finite matroids for infinite matroids using the ideas above.
  Can this theory be practically applied to solve other problems on infinite matroids?
\end{que}

\subsection{$K$-Theory for Infinite Matroids}

The categories $\MS$ and $\MSF$ are proto-exact by \Cref{theorem: main theorem} and \Cref{finitary proto-exact}, so we have a $K$-theory using the Waldhausen construction.
This suggests the following problem:

\begin{que}
  Compute the $K$-theory of $\MS$ and/or $\MSF$.
\end{que}

We directly computed the Grothendieck group of the category of finite matroids in \cite[Theorem 6.3]{eppolito2018proto}.
Our computation relies heavily on finitarity (in the category-theoretic sense) of the category of finite matroids; we showed that the class of a finite $M$ can be identified with $(r, \#E(M) - r)$ where $r$ is the rank of $M$.
Neither $\MS$ nor $\MSF$ is a finitary category, and the following \emph{a priori} simpler problem seems like the place to start.

\begin{que}
  Compute the class of $U_r(E)$ in $K_0(\MSF)$ for all $r \geq 0$ and all sets $E$.
\end{que}

\printbibliography
\end{document}